\newcommand{\R}{\mathbb R}
\newcommand{\Z}{\mathbb{Z}}
\newcommand{\ixi}{i_\xi}
\DeclareMathOperator{\lef}{Lef}
\newcommand{\rel}{\mathcal{R}}
\newtheorem{theorem}{Theorem}[section]
\newtheorem{corollary}[theorem]{Corollary}
\newtheorem{proposition}[theorem]{Proposition}
\theoremstyle{definition}
\theoremstyle{remark}
\newtheorem{remark}[theorem]{Remark}
\numberwithin{equation}{section}
\title[A non-Sasakian Lefschetz $K$-contact manifold]{A non-Sasakian Lefschetz $K$-contact manifold of Tievsky type}
\author[B. Cappelletti-Montano]{Beniamino Cappelletti-Montano}
 \address{Dipartimento di Matematica e Informatica, Universit\`a degli Studi di
 Cagliari, Via Ospedale 72, 09124 Cagliari, Italy}
 \email{b.cappellettimontano@gmail.com}
\author[A. De Nicola]{Antonio De Nicola}
 \address{CMUC, Department of Mathematics, University of Coimbra, 3001-501 Coimbra, Portugal}
 \email{antondenicola@gmail.com}
 \author[J.~C. Marrero]{Juan Carlos Marrero}
 \address{Unidad Asociada ULL-CSIC ``Geometr{\'\i}a Diferencial y Mec\'anica Geo\-m\'e\-tri\-ca''
Departamento de Matem\'aticas, Estad{\'\i}stica e Investigaci\'on Operativa, Facultad de Ciencias, Universidad de La Laguna, La Laguna, Tenerife, Spain}
 \email{jcmarrer@ull.edu.es}
\author[I. Yudin]{Ivan Yudin}
 \address{CMUC, Department of Mathematics, University of Coimbra, 3001-501 Coimbra, Portugal}
 \email{yudin@mat.uc.pt}
\subjclass[2000]{Primary 53C25, 53D35 }
\thanks{This work was partially supported by CMUC -- UID/MAT/00324/2013, funded by the Portuguese
 Government through FCT/MEC and co-funded by the European Regional Development Fund through the Partnership Agreement PT2020 (A.D.N. and I.Y.),
 by MICINN (Spain) grants MTM2012-34478 (A.D.N. and J.C.M.) and MTM2015-64166-C2-2-P (J.C.M.),
 by Prin 2010/11 -- Variet\`{a} reali e complesse: geometria, topologia e analisi armonica -- Italy (B.C.M.), and by the exploratory research project in the frame of Programa Investigador FCT IF/00016/2013 (I.Y.). The authors would like to thank the referee for his/her valuable comments which helped to improve the manuscript.}
\begin{document}

\begin{abstract}
We find a family of five dimensional completely solvable compact manifolds that constitute the first examples of  $K$-contact manifolds which satisfy the Hard Lefschetz Theorem and have a model of Tievsky type just as Sasakian manifolds but do not admit any Sasakian structure.
\end{abstract}

\maketitle

\section{Introduction}

It is well known that Sasakian geometry is closely related to K\"{a}hler geometry. Indeed, on the one hand, an odd dimensional Riemannian manifold $(M,g)$ is Sasakian if and only if its metric cone $(M\times \mathbb{R}^{+},r^2 g + dr^2)$ is a K\"{a}hler manifold. On the other hand, the Reeb vector field  of a Sasakian manifold generates a transversely K\"{a}hler foliation and thus, in particular, if the foliation is regular the leaf space is a K\"{a}hler manifold.

A natural problem in K\"{a}hler geometry is the study of obstructions to the existence of a K\"{a}hler structure on a given compact symplectic manifold. Similarly, in Sasakian geometry a problem of interest is the study of obstructions to the existence of a Sasakian structure on a given compact contact manifold. On any Sasakian manifold the Reeb vector field is Killing. Hence the only interesting case to consider is when a contact manifold has Killing Reeb vector field with respect to some metric. The problem of constructing explicit examples of $K$-contact manifolds with no Sasakian structures arises.

The study of various obstructions to the existence of a Sasakian metric on a given compact contact manifold intensified after the appearance of the influential book \cite{BoGa} by Ch.~Boyer and K.~Galicki on Sasakian geometry. In Chapter 7 of this book the authors presented a remarkable list of problems on the topology of Sasakian and, more generally, $K$-contact manifolds. Several obstructions are now known.

For  a contact manifold $(M,\eta)$, we write $H_{B}^{\ast}(M)$ for the basic cohomology with respect to the foliation induced by the Reeb vector field. In 2008 Tievsky proved in his Ph.D. thesis \cite{Ti} that in order to admit a compatible Sasakian structure the de Rham algebra $(\Omega^*(M), d)$ of a compact contact manifold $(M,\eta)$ has to be quasi-isomorphic as CDGA to an elementary Hirsch extension of  $H_B(M)$.
Namely, one has the model $(\mathcal{T}^{\ast}(M),d)$ given by
\begin{equation}\label{tievskymodel}
\mathcal{T}^{\ast}(M):=H_{B}^{\ast}(M)\otimes\mathbb{R}[y]/(y^2), \quad \quad d([\alpha]_{B} + [\beta]_{B}y):=[\beta\wedge d\eta]_B,
\end{equation}
where we put $\deg(y):=1$,  $\alpha$ is a basic $p$-form and $\beta$ a basic $(p-1)$-form. We will refer to a compact contact manifold $(M,\eta)$ admitting the model $(\mathcal{T}^{\ast}(M),d)$ as to a manifold of \emph{Tievsky type}.

Using Tievsky's result, we proved in \cite{CaDeMaYu} that a compact contact nilmanifold of dimension $2n+1$ is of Tievsky type if and only if it is a compact quotient of the  Heisenberg group $H(1,n)$ of dimension $2n+1$ by a co-compact discrete subgroup. In particular, any compact nilmanifold carries a Sasakian structure if and only if it is of this type. To the knowledge of the authors, so far, no examples of non-Sasakian compact contact manifolds of Tievsky type have been found.

Another  obstruction to the existence of Sasakian structures, recently discovered in \cite{CaNiYu}, is provided by the so-called \emph{Hard Lefschetz Theorem} for Sasakian manifolds. Let $(M^{2n+1},\eta)$ be a compact contact manifold with Reeb vector field $\xi$.
Consider the following relation between $H^p(M)$ and $H^{2n+1-p}(M)$:
 \begin{equation*}
	\!\rel_{\lef_p} \!\!= \left\{ \left(\, [ \beta ]\, , \, [ \eta\wedge (d \eta)^{n-p}\!\wedge\beta
	] \, \right) \, \middle|\,  \beta \in \Omega^p(M),\; d\beta = 0,\;  \ixi\beta =0,\; 	(d \eta)^{n-p+1}\!\wedge\beta =0
 \right\},\phantom{M}	
\end{equation*}
for $p \leq n$. In \cite{CaNiYu} it is proved that if $(M^{2n+1},\eta)$ admits a compatible Sasakian metric,  then $\rel_{\lef_p}$ is the graph of an isomorphism $\lef_p: H^p(M)\longrightarrow H^{2n+1-p}(M)$. In such a case we say that  $(M,\eta)$ is a \emph{Lefschetz contact manifold}.
In any Lefschetz contact manifold the $p$-th Betti number is even for $p$ odd with $1\leq p\leq n$.  Note that for Sasakian manifolds this fact  was known since \cite{Fu}. We point out that the  property given by the Hard Lefschetz Theorem is stronger than the above mentioned obstruction in terms of Betti numbers. Namely, in \cite{CaDeMaYu2} we constructed examples of compact $K$-contact manifolds satisfying these restrictions on the Betti numbers but for which the Hard Lefschetz Theorem does not hold. These examples in dimensions 5 and 7 where not simply-connected.
In \cite{Ch}, X.~Chen found restrictions on the possible fundamental groups of compact Sasakian manifolds and used these to produce (non-simply-connected) compact manifolds which are $K$-contact and not Sasakian.

The first examples of compact simply-connected $K$-contact manifolds of any dimension $\geq 9$ which do not admit  Sasakian structures where presented by B.~Hajduk and A.~Tralle in \cite{HaTr}. In fact, in \cite{HaTr}, the authors prove that the third Betti number of these manifolds is odd. An alternative approach to construct examples of compact simply-connected non-Sasakian $K$-contact manifolds in dimension $\geq 9$ can be found in \cite{Li}. More subtle techniques of homotopy theory combined with symplectic surgery are employed by V.~Mu{\~n}oz and A.~Tralle in \cite{MuTr} to present examples of compact simply-connected $K$-contact non-Sasakian  manifolds in dimension $7$.

On the other hand, in \cite{BiFeMuTr} I.~Biswas et al.  prove that the Massey products of order higher than three are zero on Sasakian manifolds and thus they provide a computable obstruction to the existence of Sasakian structures. Using this result, they show a new method of constructing families of examples of compact simply-connected $K$-contact manifolds with no Sasakian structures. They also present the first examples of compact simply-connected Sasakian manifolds with non-vanishing triple Massey products, which are henceforth non-formal.
We also quote the very recent paper \cite{MuRoTr} which deals with the 5-dimensional case and provides the first example of a closed 5-manifold $M$ with $H_1(M,\Z)=0$ which is $K$-contact but carries no semi-regular Sasakian structure.

The aim of the paper is to give the first example of a compact $5$-dimensional $K$-contact manifold that satisfies all known necessary conditions for compact manifold to admit Sasakian structures, even the same type of real homotopy model, and yet it does not admit any Sasakian structure.
More precisely, we present a family of 5-dimensional compact completely solvable $K$-contact formal manifolds of Tievsky type with no Sasakian structure.
We give two descriptions of our family of examples. Namely, in Section \ref{compact-solv} for each real number $p$ we construct a 5-dimensional compact $K$-contact completely solvable manifold $\widehat{G(p)}$ starting from a nilpotent Lie group of dimension $3$ and then we compute its de Rham cohomology. In Section \ref{second-de} we show that each $\widehat{G(p)}$ can also be constructed from a completely solvable Lie group of dimension $3$ and has the structure of a principal $S^1$-bundle over a symplectic compact solvmanifold of dimension $4$ which  was considered in \cite{FeGr}. This second description allows us to prove that the manifold $\widehat{G(p)}$ possesses the claimed properties.

\section{ A first description of the compact solvmanifold }\label{compact-solv}

\subsection{The completely solvable Lie group $G(p)$}\label{completely-solvable}
Let $H(1, 1)$ be the Heisenberg group of dimension $3$ which may be described as the group of real matrices of the form
\[
\left(
\begin{array}{rcl}
1 & x &    z \\
0 & 1  &  y\\
0  & 0  &   1
\end{array}
\right).
\]
The group $H(1, 1)$ is a connected simply-connected nilpotent Lie group. We will denote by $(x, y, z)$ the standard global coordinates on $H(1,1)$.

Then, our connected simply-connected Lie group of dimension $5$ is the semi-direct product
\[
G(p) = (H(1, 1) \rtimes_{\psi} \mathbb{R})\rtimes_{\phi}\mathbb{R}  ,
\]
where  $\psi: \mathbb{R} \to Aut(H(1,1))$ and $\phi: \mathbb{R} \to Aut(H(1, 1)\rtimes_{\psi}\mathbb{R}u )$ are the representations defined by
\begin{equation}
\psi(u)(x, y, z)  =  (e^{pu}x, e^{-pu}y, z), \; \; \;
\phi(t)(x, y, z,u)  =  (x, y, z + tu, u).
\label{psi-phi}
\end{equation}
Here $ p \in \mathbb{R} \setminus \{0\}$ is a parameter and $(x, y, z, u, t)$ are the standard coordinates on $G(p) =\R^5$.

The Lie group $H(1,1) \rtimes_{\psi} \R$ was considered previously in \cite{Has} (see page 756  in \cite{Has}).

From (\ref{psi-phi}) we deduce that the multiplication in $G(p)$ is given by
\begin{equation}
\begin{array}{rcl}
(x, y, z, u, t) (x', y', z', u', t') & = & (x + e^{pu}x',  y +  e^{-pu}y',\\
 && z + z' + x e^{-pu}y' + tu', u+u', t + t').
\end{array}
\label{multiplication-G(p)}
\end{equation}

The $1$-forms
\begin{equation}\label{left-inv-1-forms-G(p)}
 \alpha = e^{-pu} dx, \; \; \beta = e^{pu}dy,  \; \;  \theta = du, \; \; \gamma = dt, \; \; \eta = dz - x dy - t du
\end{equation}
give a basis of left-invariant $1$-forms on $G(p)$.
Note that
\begin{equation}\label{structure-equations}
  d\alpha = p \alpha \wedge \theta, \; \; d\beta = -p \beta \wedge \theta, \; \; d \theta = 0, \; \; d\gamma = 0, \; \; d\eta = -\alpha \wedge \beta - \gamma \wedge \theta.
\end{equation}
Next, we consider the dual basis $\{A, B, U, R, \xi \}$ of left-invariant vector fields. It follows that
\begin{equation}\label{basis-G(p)}
 A = e^{pu}\frac{\partial}{\partial x}, \; \; B = e^{-pu}\left(\frac{\partial}{\partial y} + x \frac{\partial}{\partial z}\right), \; \; U = \displaystyle \frac{\partial}{\partial u} + t \frac{\partial}{\partial z}, \; \; R = \displaystyle \frac{\partial}{\partial t}, \; \; \xi = \frac{\partial}{\partial z},
\end{equation}
and, therefore,
\begin{equation}\label{structure-eqs-G(p)}
[A, B] = \xi, \; \; [A, U] = -pA, \; \; [B, U] = pB, \; \; [R, U] = \xi,
\end{equation}
the rest of the basic Lie brackets being zero.

\begin{remark}\label{rmk:non-nilpotent}
This implies that $G(p)$ is a non-nilpotent, but completely solvable Lie group.
\end{remark}
\subsection{A co-compact discrete subgroup of $G(p)$}\label{co-com-discrete-sub}
In this section, we will describe a co-compact discrete subgroup of $G(p)$.

In fact, we will prove the following result
\begin{proposition}\label{discrete-subgroup-G(p)}
Let $p$ be a real number, $p \neq 0$, and $N$ a unimodular $2\times2$ matrix with integer entries and eigenvalues $e^p$ and $e^{-p}$. Suppose that $(x_0, x_1)$ (resp. $(y_0, y_1)$) is an eigenvector for the eigenvalue $e^p$ (resp. $e^{-p}$). Then, one may find a co-compact discrete subgroup $\Gamma(p)$ of $G(p)$ of the form
\[
\Gamma(p) =  ( \Gamma_N\rtimes_{\psi} \mathbb{Z})\rtimes_{\phi} z_2 \mathbb{Z},
\]
where $z_2 = x_1 y_0 - x_0 y_1$ and $\Gamma_N$ is a co-compact discrete subgroup of $H(1,1)$ which satisfies the following conditions:
\begin{enumerate}
\item
$\Gamma_N$ is invariant under the restriction to $\mathbb{Z}$ of the representation $\psi$ and
\item
$\Gamma_S = \Gamma_N\rtimes_{\psi} \mathbb{Z} $ is a co-compact discrete subgroup of the Lie group $S = H(1,1)\rtimes_{\psi}\mathbb{R} $ which is invariant under the restriction to $z_2\mathbb{Z}$ of the representation $\phi$.
\end{enumerate}
\end{proposition}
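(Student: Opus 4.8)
The plan is to build the lattice in three nested stages, one for each semidirect factor in $G(p)=(H(1,1)\rtimes_{\psi}\R)\rtimes_{\phi}\R$, producing first a $\psi$-invariant lattice $\Gamma_N$ in $H(1,1)$, then $\Gamma_S=\Gamma_N\rtimes_{\psi}\Z$ in $S=H(1,1)\rtimes_{\psi}\R$, and finally $\Gamma(p)=\Gamma_S\rtimes_{\phi}z_2\Z$ in $G(p)$. At each stage I would check three things in turn: that the set is a subgroup and is discrete, that the quotient is compact, and that it is invariant under the next representation so that the following semidirect product is well defined. Discreteness and compactness of the two outer stages are then essentially formal: $\Gamma_S\backslash S$ fibers over $\Z\backslash\R=S^1$ with compact fiber the nilmanifold $\Gamma_N\backslash H(1,1)$, and $\Gamma(p)\backslash G(p)$ fibers over $z_2\Z\backslash\R=S^1$ with compact fiber $\Gamma_S\backslash S$, so both total spaces are compact.

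First I would construct $\Gamma_N$ and establish condition (1). On the plane spanned by $\partial_x,\partial_y$ the differential $d\psi(1)$ is $\mathrm{diag}(e^p,e^{-p})$; letting $P$ be the matrix whose columns are the eigenvectors $(x_0,x_1)$, $(y_0,y_1)$ of $N$, one has $P^{-1}NP=\mathrm{diag}(e^p,e^{-p})$, so $\Lambda:=P^{-1}\Z^2$ is a lattice in the plane with $\mathrm{diag}(e^p,e^{-p})\,\Lambda=\Lambda$ (because $N\Z^2=\Z^2$). Since $\psi(1)$ fixes the central $z$-direction, any lift of $\Lambda$ to $H(1,1)$ with a $\psi(1)$-stable central lattice $\Lambda_z$ is automatically $\psi(\Z)$-invariant, which is exactly condition (1). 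Equivalently, on the Mal'cev basis $E_1,E_2,E_3=[E_1,E_2]$ adapted to $N$, the automorphism $d\psi(1)$ is given by $N$ on $\langle E_1,E_2\rangle$ and by $\det N=1$ on $\langle E_3\rangle$, hence lies in $\mathrm{SL}(3,\Z)$ and preserves the integral span, so a $\psi(1)$-invariant co-compact discrete $\Gamma_N\subset H(1,1)$ exists and $\Gamma_N\backslash H(1,1)$ is compact since $H(1,1)$ is nilpotent and simply connected.

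With $\Gamma_N$ fixed, $\Gamma_S=\Gamma_N\rtimes_{\psi}\Z$ is a subgroup by (1) and is co-compact and discrete as noted above. To obtain condition (2), observe that $\phi(t)$ acts only through $z\mapsto z+tu$; hence for $(x,y,z,u)\in\Gamma_S$, which has $u\in\Z$, one gets $\phi(z_2 k)(x,y,z,u)=(x,y,z+z_2 k u,u)$ with $z_2 k u\in z_2\Z$. Thus $\Gamma_S$ is $\phi(z_2\Z)$-invariant as soon as $z_2\Z\subseteq\Lambda_z$, and then $\Gamma(p)=\Gamma_S\rtimes_{\phi}z_2\Z$ is the desired co-compact discrete subgroup of $G(p)$.

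The main obstacle is the consistent choice of the central lattice $\Lambda_z$. Two sources feed into the $z$-coordinate of products in $\Gamma(p)$: the Heisenberg cocycle $x e^{-pu}y'$, whose intrinsic (antisymmetric) content on $\Lambda$ is $\omega(\Lambda,\Lambda)=\tfrac{1}{\det P}\Z=\tfrac{1}{z_2}\Z$ since $z_2=x_1 y_0-x_0 y_1=-\det P$, and the $\phi$-twist $t u'$, which contributes $z_2\Z$. For $\Lambda_z$ to contain both and still be a rank-one (discrete) lattice, $z_2$ and $1/z_2$ must be commensurable, i.e.\ $z_2^2\in\mathbb{Q}$; this is where the integrality of $N$ is essential, because a direct computation gives $z_2^2=(\det P)^2=c\,((\mathrm{tr}\,N)^2-4)$ for a rational constant $c$ depending only on the normalization of the eigenvectors, and $\mathrm{tr}\,N=2\cosh p\in\Z$. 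Normalizing the eigenvectors so that $z_2^2\in\Z$ then lets $\Lambda_z=\tfrac{1}{z_2}\Z$ absorb both contributions. I would also dispatch a related subtlety: in the matrix coordinates of \eqref{multiplication-G(p)} the cocycle $xy'$ itself takes dense values on $\Lambda$, so the generators of $\Gamma_N$ must be assigned suitable central coordinates (equivalently, one passes to exponential coordinates, where the cocycle becomes the antisymmetric $\tfrac12\omega$ with rank-one values) in order for the subgroup to close up discretely; once this is arranged, the discreteness and the semidirect structure go through as above.
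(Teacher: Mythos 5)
Your three-stage plan coincides with the paper's architecture, and your treatment of the two outer stages (cocompactness via fibrations over $S^1$, and the observation that $\phi(z_2m)$ only shifts $z$ by $z_2mu$ with $u\in\Z$) is the same as in the paper. Both genuine problems lie in the Heisenberg stage. The first is your claim that ``any lift of $\Lambda$ to $H(1,1)$ with a $\psi(1)$-stable central lattice is automatically $\psi(\Z)$-invariant'' (and its Mal'cev-basis variant: $d\psi(1)$ preserves the integral span, hence an invariant lattice subgroup exists). This is false: invariance of the planar lattice plus triviality on the center does not lift for free. In exponential coordinates the subgroup generated by lifts $(e_1,0),(e_2,0)$ of a basis of $\Lambda$ is $\{(m_1e_1+m_2e_2,\ \tfrac{m_1m_2}{2}\delta+k\delta)\}$ with $\delta=\omega(e_1,e_2)$, and $\psi(1)$-invariance is the parity condition $m_1m_2\equiv m_1'm_2'\pmod 2$, where $(m_1',m_2')$ is the image of $(m_1,m_2)$ under the matrix $M=\begin{pmatrix} a&b\\ c&d\end{pmatrix}$ of $\mathrm{diag}(e^p,e^{-p})$ in that basis; this amounts to $ac\equiv bd\equiv 0\pmod 2$, which fails, e.g., for $N=\begin{pmatrix} 2&1\\ 1&1\end{pmatrix}$ (eigenvalues $(3\pm\sqrt 5)/2$). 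Securing invariance is precisely where the paper's proof does its real work: the central entries $z_0,z_1$ of the generators $h_0,h_1$ are chosen so that $\psi(1)(h_0)=h_0^{n_{00}}h_1^{n_{01}}h_2^{r}$ and $\psi(1)(h_1)=h_0^{n_{10}}h_1^{n_{11}}h_2^{s}$ with $r,s\in\Z$, and the resulting congruence system for $(z_0,z_1)$ modulo $z_2\Z$ is solvable exactly because $1$ is not an eigenvalue of $N$. You do say at the end that the generators ``must be assigned suitable central coordinates'', but you attribute this to discreteness --- which is in fact automatic, since the subgroup generated by any two lifts of a planar basis is discrete and co-compact whatever their central coordinates --- rather than to invariance, you never show such a choice exists, and the hypothesis that makes it exist (the eigenvalues of $N$ being different from $1$) is never used in your argument.

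The second problem is that your ``main obstacle'' is an artifact of taking $\Lambda=P^{-1}\Z^2$ instead of the lattice spanned by $(x_0,y_0)$ and $(x_1,y_1)$ (i.e.\ $P^{T}\Z^2$, which is equally $\mathrm{diag}(e^p,e^{-p})$-invariant, with the action given by $N^T$). Your lattice has symplectic covolume $1/|z_2|$, so commutators force $\tfrac{1}{z_2}\Z$ into the center while $\phi$-invariance forces $z_2\Z$ into it; hence your commensurability condition $z_2^2\in\mathbb{Q}$ and the renormalization of the eigenvectors. But the proposition takes the eigenvectors as given and asserts the conclusion with $z_2=x_1y_0-x_0y_1$ computed from those very eigenvectors; rescaling them changes $z_2$, so your argument proves the statement only for specially normalized eigenvectors, not as stated. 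With the paper's lattice the covolume is exactly $|z_2|$: the commutator of $h_0$ and $h_1$ is the central element $h_2$ with $z$-entry $z_2$, so the central lattice of $\Gamma_N$ is $z_2\Z$ and it absorbs both the Heisenberg commutators and the $\phi$-shifts $z_2mu$ with no condition on $z_2$ and for arbitrary eigenvectors. The definition of $z_2$ in the statement is engineered precisely for this coincidence; adopting it removes your obstacle entirely.
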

\begin{proof}
In order to obtain co-compact discrete subgroups of the Lie group $S$, a general construction was developed in \cite{Has} (see pages 756 and 757 in \cite{Has}).

We will see that it is possible to choose a co-compact discrete subgroup $\Gamma_S$ of $S$ of the form $\Gamma_N \rtimes_{\psi} \mathbb{Z}$, with $\Gamma_N$ a co-compact discrete subgroup of $H(1, 1)$ such that $\Gamma_S$ is invariant under the restriction to $z_2\mathbb{Z}$ of the representation $\phi$. In order to obtain $\Gamma_S$, we will follow the construction in \cite{Has}.

Suppose that
\[
N =
\left(
\begin{array}{rcl}
n_{00} & n_{01}  \\
n_{10} & n_{11}
\end{array}
\right)
\in Sl(2, \mathbb{Z}).
\]
Then, we take the matrices in $H(1, 1)$
\begin{equation}
h_0 =
\left(
\begin{array}{rcl}
1 & x_0 &    z_0 \\
0 & 1  &  y_0\\
0  & 0  &   1
\end{array}
\right),
\; \, \; \; h_1 =
\left(
\begin{array}{rcl}
1 & x_1 &    z_1 \\
0 & 1  &  y_1\\
0  & 0  &   1
\end{array}
\right)
\label{h-0-h-1}
\end{equation}
with $z_0, z_1 \in \mathbb{R}$ which will be fixed  later.

We have that
\[
h_2(h_0 h_1) = h_1 h_0,
\]
with
\begin{equation}
h_2 =
\left(
\begin{array}{rcl}
1 & 0 &    z_2 \\
0 & 1  &  0 \\
0  & 0  &   1
\end{array}
\right) \; \; \mbox{ and } z_2 = x_1 y_0 - x_0 y_1 \neq 0,
\label{h-2}
\end{equation}
(note that $\{(x_0, y_0), (x_1, y_1)\}$ is a basis of $\mathbb{R}^2$).

Thus, we deduce that
\begin{equation}\label{Commutation}
h_2 h_0 = h_0 h_2, \; \; h_1 h_2 = h_2 h_1, \; \; h_1^{m_1} h_0^{m_0} = h_2^{m_0 m_1}h_0^{m_0} h_1^{m_1}, \; \; \; \mbox{ for } m_0, m_1 \in \mathbb{Z}.
\end{equation}
Denote by $\Gamma_{N}$ the subgroup of $H(1, 1)$ which is generated by $h_0$ and $h_1$.

From (\ref{Commutation}), it follows that
\begin{equation}\label{Gamma-N}
\Gamma_{N} = \{ h_0^{m_0} h_{1}^{m_1} h_2^{m_2} \; / \; m_0, m_1, m_2 \in \mathbb{Z} \}\subseteq H(1,1).
\end{equation}
In fact, the map
\[
\mathbb{Z}^3 \to \Gamma_{N}, \; \; (m_0, m_1, m_2) \to h_0^{m_0}  h_{1}^{m_1} h_2^{m_2}
\]
is a group isomorphism, where we consider the multiplication
\[
(m_0, m_1, m_2)(m'_0, m'_1, m'_2) = (m_0 + m'_0, m_1 + m'_1, m_2 + m'_2 + m_1 m'_0)
\]
on $\mathbb{Z}^3$.

Next, we need to ensure that the subgroup $\Gamma_N$ is invariant under the restriction to $\mathbb{Z}$ of the representation $\psi$. This will produce some restrictions on the real numbers $z_0$ and $z_1$.

In fact, using that the eigenvalues of $N$ are different from $1$, one may choose $z_0, z_1 \in \mathbb{R}$ such that
\[
\psi(1)(h_0) = h_0^{n_{00}} h_{1}^{n_{01}} h_{2}^r, \; \; \psi(1)(h_1) = h_0^{n_{10}} h_{1}^{n_{11}} h_{2}^s
\]
with $r, s \in \mathbb{Z}$.

This implies that $\Gamma_{N}$ is invariant under the restriction to $\mathbb{Z}$ of $\psi$. Thus, we have that $\Gamma_S = \Gamma_N \rtimes_{\psi} \mathbb{Z}$ is a co-compact discrete subgroup of the Lie group $S =  H(1, 1)\rtimes_{\psi}\mathbb{R} $.

On the other hand, from (\ref{psi-phi}) we deduce that the multiplication in the Lie group $S$ is given by
\begin{equation}\label{multiplicacion-H}
(x, y, z, u) (x', y', z', u') =  (x + e^{pu}x', y + e^{-pu}y', z + z' + x e^{-pu}y', u+u').
\end{equation}
Therefore, using again (\ref{psi-phi}), it follows that
\[
\phi(t)(x, y, z, u) = (x, y, z, u) (0, 0, tu, 0).
\]
Consequently, if $m\in \mathbb{Z}$ and $(x, y, z, u) \in \Gamma_S$ then we deduce that
\[
\phi(z_2 m)(x, y, z,u) = (x, y, z,u) (h_2^{mu}, 0) \in \Gamma_{S}.
\]
In other words, $\Gamma_S$ is invariant under the restriction to $z_2 \mathbb{Z}$ of the representation $\phi$. This implies that
\begin{equation}\label{Gamma-p}
\Gamma(p) = \Gamma_{S} \rtimes_{\phi} z_2 \mathbb{Z} = ( \Gamma_{N}\rtimes_{\psi}\mathbb{Z})\rtimes_{\phi} z_2 \mathbb{Z}
\end{equation}
is a co-compact discrete subgroup of $G(p)$.
\end{proof}
\begin{remark}
The Lie group $S$ is completely solvable and the quotient manifold $S / \Gamma_S$ is a compact solvmanifold. In fact, $S / \Gamma_S$ is diffeomorphic to an Inoue surface of type $S^{+}$ (see page 757 in \cite{Has}).
\end{remark}

\subsection{De Rham cohomology of the compact solvmanifold $\widehat{G(p)} = G(p) / \Gamma(p)$}
\label{derham}
The Lie group $G(p)$ is completely solvable. So, in order to compute the de Rham cohomology of $\widehat{G(p)}$, we may use Hattori's theorem \cite{Ha} and we have that
\[
H^*(\widehat{G(p)}) \simeq H^*({\mathfrak g}^*),
\]
where ${\mathfrak g}$ is the Lie algebra of $G(p)$.

Thus, if we denote by  $\hat{\alpha}$, $\hat{\beta}$, $\hat{\theta}$, $\hat{\gamma}$ and $\hat{\eta}$ the $1$-forms on $\widehat{G(p)}$ which are induced by the left-invariant $1$-forms $\alpha$, $\beta$, $\theta$, $\gamma$  and $\eta$, respectively, we deduce the following result.
\begin{corollary}
	\label{derham-cor}
The de Rham cohomology groups of $\widehat{G(p)}$ are the vector spaces
\begin{equation}
\begin{array}{rcl}
H^0(\widehat{G(p)}) & = & \langle 1 \rangle, \\
H^1(\widehat{G(p)}) & = & \langle [\hat{\theta}], [\hat{\gamma}] \rangle, \\
H^2(\widehat{G(p)}) & = & \langle [\hat{\theta} \wedge \hat{\gamma} ] \rangle = \langle [\hat{\theta} \wedge \hat{\gamma} - \hat{\alpha} \wedge \hat{\beta}] \rangle, \\
H^3(\widehat{G(p)}) & = & \langle [\hat{\eta} \wedge (\hat{\theta} \wedge \hat{\gamma} - \hat{\alpha} \wedge \hat{\beta}) ] \rangle, \\
H^4(\widehat{G(p)}) & = & \langle [\hat{\eta} \wedge \hat{\gamma} \wedge \hat{\alpha} \wedge \hat{\beta}], [\hat{\alpha} \wedge \hat{\beta}\wedge \hat{\theta} \wedge \hat{\eta}] \rangle, \\
H^5(\widehat{G(p)}) & = & \langle [ \hat{\alpha} \wedge \hat{\beta} \wedge \hat{\theta} \wedge \hat{\gamma} \wedge \hat{\eta}] \rangle.
\end{array}
\label{deRham-compact-solv}
\end{equation}
\end{corollary}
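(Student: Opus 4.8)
The plan is to reduce the computation to pure linear algebra on a finite-dimensional complex and then carry it out degree by degree. Since the corollary records that $G(p)$ is completely solvable, Hattori's theorem \cite{Ha} identifies $H^*(\widehat{G(p)})$ with the Chevalley--Eilenberg cohomology $H^*(\mathfrak g^*)$ of the Lie algebra $\mathfrak g$, computed on the finite-dimensional complex $(\Lambda^\bullet\mathfrak g^*, d)$ whose degree-one generators are $\alpha,\beta,\theta,\gamma,\eta$ and whose differential is prescribed on generators by (\ref{structure-equations}) and extended as an antiderivation. Under this identification the induced forms $\hat\alpha,\dots,\hat\eta$ correspond to $\alpha,\dots,\eta$, so it suffices to exhibit the stated classes in $H^*(\mathfrak g^*)$. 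The graded pieces have dimensions $1,5,10,10,5,1$, and I would fix the monomial basis in each degree and simply tabulate $d$ on each monomial using (\ref{structure-equations}).

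In low degrees the answer is immediate. In degree one, $d\theta=d\gamma=0$ while $\alpha,\beta,\eta$ are not closed and there are no exact $1$-forms, so $H^1=\langle[\theta],[\gamma]\rangle$. The image of $d$ from degree one is spanned by $d\alpha=p\,\alpha\wedge\theta$, $d\beta=-p\,\beta\wedge\theta$ and $d\eta=-\alpha\wedge\beta-\gamma\wedge\theta$, a three-dimensional space; this is where the hypothesis $p\neq0$ first enters, as it guarantees that $\alpha\wedge\theta$ and $\beta\wedge\theta$ are genuinely exact. In degree two I would solve $d\omega=0$ for a general $2$-form $\omega$; again because $p\neq0$, every monomial involving $\alpha$ or $\beta$ outside of $\alpha\wedge\beta$, $\alpha\wedge\theta$, $\beta\wedge\theta$ is forced to have vanishing coefficient, leaving the four-dimensional space of cocycles $\langle\alpha\wedge\beta,\alpha\wedge\theta,\beta\wedge\theta,\theta\wedge\gamma\rangle$. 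Quotienting by the three-dimensional image yields a one-dimensional $H^2$; the relation $d\eta=-\alpha\wedge\beta-\gamma\wedge\theta$ identifies the surviving class with $[\theta\wedge\gamma]$, giving the stated generator. Degree three is handled by the same bookkeeping: the space of cocycles turns out to be seven-dimensional and the image of $d$ from degree two six-dimensional, so $H^3$ is one-dimensional, generated by the listed wedge of $\eta$ with the transverse symplectic $2$-form. Finally every $4$-form is closed, so $H^4$ is the quotient of the $5$-dimensional $\Lambda^4\mathfrak g^*$ by the three-dimensional image of $d$ from degree three, giving $b_4=2$ with the two listed generators, while the top form $\alpha\wedge\beta\wedge\theta\wedge\gamma\wedge\eta$ is closed and not exact (because $\mathfrak g$ is unimodular, $\operatorname{tr}(\ad_X)=0$ for all $X$), so $H^5$ is one-dimensional.

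As an independent check I would invoke Poincaré duality: $\widehat{G(p)}$ is a compact orientable $5$-manifold, so $b_k=b_{5-k}$, which is consistent with $(b_0,\dots,b_5)=(1,2,1,1,2,1)$ and confirms the dimensions found in degrees three, four and five. The only real work, and the step I expect to be the main obstacle, is the degree-two and degree-three analysis: these are the two $10$-dimensional pieces, and one must argue carefully that the kernel of $d$ is exactly as small as claimed. The hypothesis $p\neq0$ is precisely what prevents spurious cocycles built from $\alpha$ and $\beta$ (such as $\alpha\wedge\gamma$, $\alpha\wedge\eta$, $\beta\wedge\gamma$, $\beta\wedge\eta$) from surviving, so keeping track of the $p$-coefficients throughout is essential.
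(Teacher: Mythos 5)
Your overall strategy coincides with the paper's: invoke Hattori's theorem to replace $H^*(\widehat{G(p)})$ by the Chevalley--Eilenberg cohomology $H^*({\mathfrak g}^*)$, then do degree-by-degree linear algebra on $(\Lambda^\bullet{\mathfrak g}^*,d)$ using (\ref{structure-equations}). Your dimension counts are all correct: the space of $2$-cocycles is $4$-dimensional, the image from degree one is $3$-dimensional, the $3$-cocycles form a $7$-dimensional space, the image from degree two is $6$-dimensional, every $4$-form is closed (unimodularity), and the Betti numbers come out $(1,2,1,1,2,1)$, consistent with Poincar\'e duality.

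There is, however, one concrete error, precisely at the step you yourself single out as the crux. You assert that $H^3$ is generated by ``the listed wedge of $\eta$ with the transverse symplectic $2$-form'', i.e.\ by $\eta\wedge(\theta\wedge\gamma-\alpha\wedge\beta)$. Under the sign conventions of (\ref{structure-equations}) this form is not even closed: since $\gamma\wedge\theta=-\theta\wedge\gamma$, one has $d\eta=-\alpha\wedge\beta-\gamma\wedge\theta=\theta\wedge\gamma-\alpha\wedge\beta$, so $\theta\wedge\gamma-\alpha\wedge\beta$ is \emph{exact} (hence $[\theta\wedge\gamma-\alpha\wedge\beta]=0$ and it cannot serve as the second listed generator of $H^2$ either; instead $[\theta\wedge\gamma]=[\alpha\wedge\beta]$), and
\begin{equation*}
d\bigl(\eta\wedge(\theta\wedge\gamma-\alpha\wedge\beta)\bigr)=d(\eta\wedge d\eta)=(d\eta)^2=2\,\alpha\wedge\beta\wedge\gamma\wedge\theta\neq 0,
\end{equation*}
which is forced because $\eta$ is a contact form. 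Carrying out the degree-three bookkeeping you describe, the unique closed non-exact combination is $\alpha\wedge\beta\wedge\eta+\theta\wedge\gamma\wedge\eta=\eta\wedge(\theta\wedge\gamma+\alpha\wedge\beta)$, and correspondingly $H^2=\langle[\theta\wedge\gamma]\rangle=\langle[\theta\wedge\gamma+\alpha\wedge\beta]\rangle$. In other words, the corollary as printed carries a sign typo (which also propagates to the definition of $\tau(b)$ in the proof of part (3) of the theorem in Section 4), and your blind verification endorsed the non-closed representative instead of catching it. The remedy is exactly the honest tabulation you proposed but did not execute in degree three: it would have produced the $+$ sign.
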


\section{A second description of the contact compact solvmanifold $\widehat{G(p)}$}\label{second-de}
In this section, we will present a different description of the compact solvmanifold $\widehat{G(p)}$ as a principal $S^1$-bundle over a symplectic compact solvmanifold $(K / \Gamma_K) \times S^1 $ of dimension $4$. In fact, from (\ref{structure-equations}), we deduce that $\hat{\eta}$ defines a contact structure on $\widehat{G(p)}$ and, in addition, we will see that $(K/\Gamma_{K})\times S^1 $ is just the space of orbits of the Reeb vector field associated with $\hat{\eta}$ and the symplectic structure on this space is induced by the contact structure of $\widehat{G(p)}$.

Define $K$ to be the connected simply-connected Lie group of dimension $3$ given by
\[
K = \R^2 \rtimes_{\zeta} \R
\]
where $\zeta: \mathbb{R} \to Aut(\mathbb{R}^2)$ is the representation given by
\[
\zeta(u)(x, y) = (e^{pu}x, e^{-pu}y),
\]
for $u \in \mathbb{R}$ and $(x, y) \in \mathbb{R}^2$ (see \cite{AuGrHa}).

Thus, if we consider the standard coordinates $(u, x, y)$ on $K$ then the multiplication is given by
\begin{equation}\label{multiplication-K}
(x, y, u) (x', y', u') =  (x + e^{pu}x', y + e^{-pu}y',u+u').
\end{equation}
We have that a basis of left-invariant $1$-forms is $\{ \alpha_{K}, \beta_{K}, \theta_{K} \}$, with
\begin{equation}\label{left-inv-1-forms-K}
\alpha_{K} = e^{-pu} dx, \; \; \beta_{K} = e^{pu}dy, \;\; \theta_{K} = du.
\end{equation}
Note that
\begin{equation}\label{structure-eqs-K}
d\alpha_K = p \alpha_K \wedge \theta_K, \; \; d\beta_K = -p \beta_K \wedge \theta_K, \;\; d \theta_{K} = 0.
\end{equation}
Now, we consider the dual basis $\{A_K, B_K, U_K \}$ of left-invariant vector fields on $K$. It follows that
\[
A_K = e^{pu}\frac{\partial}{\partial x}, \; \; B_K = e^{-pu} \frac{\partial}{\partial y}, \;\; U_K = \displaystyle \frac{\partial}{\partial u}
\]
and, thus,
\[
[A_K, U_K] = -pA_{K}, \; \; [B_K, U_K] = pB_K,
\]
the rest of the basic Lie brackets being zero.

Therefore, $K$ is a non-nilpotent completely solvable Lie group.

The group $K$ admits co-compact discrete subgroups (see \cite{BeGo}).
In fact, suppose that $N \in Sl(2, \mathbb{Z})$, with eigenvalues $e^p$ and $e^{-p}$, and that $(x_0, x_1)$, $(y_0, y_1)$ are eigenvectors of $N$ as in Section \ref{co-com-discrete-sub}. Then, we have that the integer lattice $\Gamma_{\mathbb{R}^2}$ on $\mathbb{R}^2$, which is generated by the basis $\{(x_0, y_0), (x_1, y_1)\}$, is invariant under the restriction to $\Z$ of $\zeta$ and
\begin{equation}\label{Gamma-K}
\Gamma_K = \Gamma_{\R^2} \rtimes_{\zeta} \Z
\end{equation}
is a co-compact discrete subgroup of $K$ (see \cite{BeGo}).

This implies that $\hat{K} = K / \Gamma_K$ is a compact solvmanifold.

Next, we consider the completely solvable Lie group $K \times \R$ and the discrete co-compact subgroup
\begin{equation}\label{discrete-base}
\Gamma_{K \times \R} =  \Gamma_K  \times z_2 \Z,
\end{equation}
with $z_2 = x_1 y_0 - x_0 y_1 \in \mathbb{R}$.
The corresponding compact solvmanifold is
\begin{equation}\label{description-base-space}
\displaystyle \frac{K \times \R}{\Gamma_{K\times \R}} = \hat{K}\times (\R / z_2 \Z) \simeq \hat{K} \times S^1.
\end{equation}

On the other hand, using (\ref{structure-eqs-K}), we have that
\begin{equation}\label{Omega}
 \Omega = -(\alpha_K \wedge \beta_K + dt \wedge \theta_K)
 \end{equation}
is a left-invariant symplectic $2$-form on $K\times \R$. Thus, $\Omega$ induces a symplectic form $\hat{\Omega}$ on $\hat{K}\times S^1$.
In fact, if $\hat{\omega}$ is the length element of $S^1$ and $\{ \widehat{\alpha_K}, \widehat{\beta_K}, \widehat{\theta_K}\}$ is the global basis of $1$-forms on $\hat{K}$ induced by the left-invariant $1$-forms $\alpha_K$, $\beta_K$ and $\theta_K$, respectively, then
\begin{equation}\label{hat-Omega}
\hat{\Omega} = -(\widehat{\alpha_K} \wedge \widehat{\beta_K} + \hat{\omega} \wedge \widehat{\theta_K}).
\end{equation}
Moreover, we may prove the following result
\begin{theorem}\label{regular-contact}
The contact compact solvmanifold $(\widehat{G(p)}, \hat{\eta})$ is regular, the orbit space of the Reeb vector field $\hat{\xi}$ is the compact solvmanifold $\hat{K}\times S^1$ and the symplectic structure on $\hat{K}\times S^1$ is just the $2$-form $\hat{\Omega}$ given by (\ref{hat-Omega}).
\end{theorem}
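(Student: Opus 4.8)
The plan is to exhibit the Reeb flow on $\widehat{G(p)}$ as a free circle action and to identify the associated principal bundle with the map $G(p)\to K\times\R$ that forgets the central coordinate $z$. First I would read off from (\ref{basis-G(p)}) and (\ref{structure-equations}) that $\hat\xi$ is induced by $\xi=\partial/\partial z$, which is the Reeb field of $\hat\eta$: indeed $\eta(\xi)=1$ and $i_\xi\,d\eta=0$, since none of $\alpha,\beta,\gamma,\theta$ in (\ref{left-inv-1-forms-G(p)}) involves $dz$. Using the multiplication (\ref{multiplication-G(p)}) one checks that the flow of the left-invariant field $\xi$ is right translation by the one-parameter subgroup $Z=\{(0,0,s,0,0):s\in\R\}$, and that $Z$ is \emph{central} in $G(p)$. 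Next I would introduce $\pi:G(p)\to K\times\R$, $(x,y,z,u,t)\mapsto(x,y,u,t)$, and verify directly from (\ref{multiplication-G(p)}) and (\ref{multiplication-K}) that $\pi$ is a surjective Lie group homomorphism with $\ker\pi=Z$; thus $\pi$ realizes $K\times\R$ as the orbit space of $\xi$ on $G(p)$.

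The core step is to descend this to $\widehat{G(p)}$. Because $Z$ is central, right translation by $Z$ commutes with the translations defining $\widehat{G(p)}$, so the flow of $\xi$ descends to an $\R$-action, and the orbit through the class of $g$ closes up precisely when $(0,0,s,0,0)\in\Gamma(p)$ for some $s\neq0$, i.e. for $s\in\Gamma(p)\cap Z$. From the explicit generators I would show $\Gamma(p)\cap Z=z_2\Z$, generated by $h_2=(0,0,z_2,0,0)$ of (\ref{h-2}): an element of $\Gamma_N$ has vanishing $x$ and $y$ coordinates only if its $h_0,h_1$ exponents vanish (as $(x_0,y_0),(x_1,y_1)$ are independent), while nontrivial $\Z$- or $z_2\Z$-factors force nonzero $u$- or $t$-coordinates. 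Hence the $\R$-action factors through a \emph{free} action of $S^1=\R/z_2\Z$, proving $\hat\eta$ regular. For the orbit space I would note $\pi(\Gamma(p))=\Gamma_{K\times\R}=\Gamma_K\times z_2\Z$ of (\ref{discrete-base}) (the four generators map to $(x_0,y_0,0,0)$, $(x_1,y_1,0,0)$, $(0,0,1,0)$, $(0,0,0,z_2)$), and that, $Z$ being central, the $\Gamma(p)$-action on $G(p)/Z\cong K\times\R$ factors through $\pi(\Gamma(p))$. Therefore, using (\ref{description-base-space}),
\[
\widehat{G(p)}/S^1\;\cong\;\frac{K\times\R}{\Gamma_{K\times\R}}\;=\;\hat K\times(\R/z_2\Z)\;\simeq\;\hat K\times S^1 .
\]

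Finally, for the symplectic form I would invoke the standard fact that on a regular contact manifold $d\hat\eta$ is basic, since $i_{\hat\xi}\,d\hat\eta=0$ and $L_{\hat\xi}\,d\hat\eta=0$, so it descends to a closed $2$-form on the base, nondegenerate by the contact condition $\hat\eta\wedge(d\hat\eta)^2\neq0$. It then remains to identify this form with $\hat\Omega$, which is immediate: from (\ref{left-inv-1-forms-G(p)}) and (\ref{left-inv-1-forms-K}) one has $\pi^*\alpha_K=\alpha$, $\pi^*\beta_K=\beta$, $\pi^*\theta_K=\theta$ and $\pi^*(dt)=\gamma$, so by (\ref{structure-equations}) and (\ref{Omega}),
\[
\pi^*\Omega=-(\alpha\wedge\beta+\gamma\wedge\theta)=d\eta .
\]
Passing to the quotients through the induced projection $\hat\pi:\widehat{G(p)}\to\hat K\times S^1$ gives $d\hat\eta=\hat\pi^*\hat\Omega$, identifying $\hat\Omega$ of (\ref{hat-Omega}) as the symplectic form induced by $\hat\eta$. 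The one genuinely delicate point is the bookkeeping of the second paragraph, establishing simultaneously that $\Gamma(p)\cap Z=z_2\Z$ (freeness, hence regularity) and that $\pi$ carries $\Gamma(p)$ onto $\Gamma_{K\times\R}$ (orbit space); everything else is either a one-line flow computation or a direct pullback of left-invariant forms.
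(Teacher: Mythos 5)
Your proof is correct and follows essentially the same route as the paper's: both use the projection $\pi\colon G(p)\to K\times\R$ of (\ref{pi}) as a Lie group epimorphism, the identification $\pi(\Gamma(p))=\Gamma_{K\times\R}$ of (\ref{projec-discrete}), and the pullback identity $\pi^*\Omega=d\eta$ to obtain the principal $S^1$-bundle $\hat\pi$ and the relation $\hat\pi^*\hat\Omega=d\hat\eta$. The only difference is that you make explicit a point the paper leaves implicit, namely the computation $\Gamma(p)\cap\ker\pi=z_2\Z$ (so the fiber is $Z/z_2\Z\cong S^1$ and the Reeb flow gives a free $S^1$-action), which is a worthwhile clarification but not a different argument.
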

\begin{proof}
Using (\ref{multiplication-G(p)}) and (\ref{multiplication-K}), we deduce that the canonical projection
\begin{equation}\label{pi}
\pi: G(p) \to K \times \R, \; \; (x, y, z, u, t) \to (x, y, u, t)
\end{equation}
is a Lie group epimorphism. In fact, from (\ref{basis-G(p)}), it follows that $K\times \R$ is just the space of orbits of the left-invariant vector field $\xi$ on $G(p)$.

In addition, if $\Gamma(p)$ is the discrete co-compact subgroup of $G(p)$ given by (\ref{Gamma-p})
then, using (\ref{h-0-h-1}), (\ref{h-2}), (\ref{Gamma-N}) and (\ref{Gamma-p}), we obtain that
\begin{equation}\label{projec-discrete}
\pi(\Gamma(p)) = (\Gamma_{\R^2}\rtimes_{\zeta} \Z) \times z_2 \Z.
\end{equation}
Thus, from (\ref{Gamma-K}), (\ref{discrete-base}),  (\ref{description-base-space}) and (\ref{projec-discrete}), we have that the Lie group epimorphism $\pi: G(p) \to K \times \R$ induces a principal $S^1$-bundle
\[
\hat{\pi}: \widehat{G(p)} \to \hat{K} \times S^1
\]
where the action of $S^1$ on $\widehat{G(p)}$ is just the flow of the Reeb vector field $\hat{\xi}$ and, therefore, the base space is just the orbit space of $\hat{\xi}$.

On the other hand, using (\ref{left-inv-1-forms-G(p)}), (\ref{structure-equations}), (\ref{left-inv-1-forms-K}), (\ref{Omega}) and (\ref{pi}), it follows that $\pi^*(\Omega) = d\eta$ and, consequently,
\[
\hat{\pi}^*(\hat{\Omega}) = d\hat{\eta}.
\]
This implies that our contact structure $\hat{\eta}$ on $\widehat{G(p)}$ is regular.
\end{proof}
\begin{remark}
The left-invariant $2$-form $\Omega$ on $K\times\R$ was used, in \cite{FeGr}, in order to obtain examples of Lefschetz symplectic structures on compact solvmanifolds of the form $(K\times\R) / D$, with $D$ a co-compact discrete subgroup. We remark that these manifolds are formal (see \cite{FeGr} for more details).
\end{remark}

\section{The $K$-contact structure on $\widehat{G(p)}$ and its properties}
In this section, we will introduce a $K$-contact structure on $\widehat{G(p)}$ and we will discuss its properties.

Denote by $\hat{A}$, $\hat{B}$, $\hat{U}$, $\hat{R}$ and $\hat{\xi}$ the vector fields on $\widehat{G(p)}$ which are induced by the left-invariant vector fields $A$, $B$, $U$, $R$ and $\xi$, respectively. Then, it is clear that $\{ \hat{A}, \hat{B}, \hat{U}, \hat{R}, \hat{\xi} \}$ is a global basis of vector fields on $\widehat{G(p)}$. Moreover, the dual basis of $1$-forms is just
$\{ \hat{\alpha}, \hat{\beta}, \hat{\theta}, \hat{\gamma}, \hat{\eta}\}$.

In the following theorem we will use the following notions.
Given a commutative differential graded algebra (CDGA) $(A,d)$
we define an \emph{elementary extension} of $(A,d)$ with respect to the closed
element $b\in A_{2}$, to be  $A\otimes \Lambda \left\langle y
\right\rangle$, with $\deg(y)=1$ and
$d y = b$.
We will say that a
contact manifold $(M,\eta)$ is of \emph{Tievsky type}, if the de Rham cohomology
algebra $\Omega^*(M)$ is quasi-isomorphic (as CDGA) to the elementary extension of
$H_B(M)$ with respect to $[d \eta]_B$.
Note, that according to~\cite{Ti}, every Sasakian manifold is of Tievsky
type.
\begin{theorem}
Let $\hat{\phi}$ be the $(1, 1)$-tensor field on $\widehat{G(p)}$ which is characterized by the following conditions:
\begin{equation}\label{phi-hat}
\hat{\phi}(\hat{A}) = -\hat{B}, \; \;  \hat{\phi}(\hat{B}) = \hat{A}, \; \;   \hat{\phi}(\hat{U}) = \hat{R},  \; \; \hat{\phi}(\hat{R}) = -\hat{U}, \;\; \hat{\phi}({\hat{\xi}}) = 0,
\end{equation}
and $\hat{g}$ the Riemannian metric on $\widehat{G(p)}$ whose matrix associated with respect to the basis $\{\hat{A}, \hat{B}, \hat{U}, \hat{R}, \hat{\xi} \}$ is the identity, that is, the basis $\{\hat{A}, \hat{B}, \hat{U}, \hat{R}, \hat{\xi} \}$ is orthonormal. Then:
\begin{enumerate}[(1)]
\item
The almost contact metric structure $(\hat{\phi}, \hat{\xi}, \hat{\eta}, \hat{g})$ on $\widehat{G(p)}$ is $K$-contact.

\item
The contact structure $\hat{\eta}$ on $\widehat{G(p)}$ has the Hard Lefschetz property.

\item The minimal model (over $\R$) of $\widehat{G(p)}$ is isomorphic to the
exterior algebra $\Lambda\langle a_1,a_2,b\rangle$ with zero differential, where the  grading
is determined by the requirement that $a_1$, $a_2$ have degree $1$, and
$b$ has degree $3$. In particular, $\widehat{G(p)}$ is formal.
\item
	The contact manifold $(\widehat{G(p)},\eta)$ is of Tievsky type.

\item
The manifold $\widehat{G(p)}$ does not admit Sasakian structures.
\end{enumerate}
\end{theorem}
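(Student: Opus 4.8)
My plan is to argue by contradiction, exploiting the fact that parts~(1)--(4) have already shown that every known \emph{real-homotopy-theoretic} obstruction to a Sasakian metric vanishes on $\widehat{G(p)}$: it is $K$-contact, it has the Hard Lefschetz property, it is formal, and it even realizes the Tievsky model characteristic of Sasakian manifolds. Consequently no invariant of the real homotopy type can detect the absence of a Sasakian structure, and I must turn to a finer invariant. The natural candidate is the fundamental group $\pi_1(\widehat{G(p)})$, which, precisely because the manifold is formal, is \emph{not} determined by the real homotopy type and can therefore obstruct Sasakian-ness even when all cohomological and rational-homotopy conditions hold. So I would assume $\widehat{G(p)}$ carries a Sasakian structure and seek a contradiction at the level of $\pi_1$.

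First I would identify the fundamental group. Since $\widehat{G(p)} = G(p)/\Gamma(p)$ with $G(p)$ connected and simply connected, one has $\pi_1(\widehat{G(p)}) \cong \Gamma(p) = (\Gamma_N\rtimes_\psi\mathbb{Z})\rtimes_\phi z_2\mathbb{Z}$ by Proposition~\ref{discrete-subgroup-G(p)}. The structural feature I would extract is that $\Gamma(p)$ is polycyclic (being a lattice in the connected solvable group $G(p)$) but is \emph{not} virtually nilpotent. Indeed, the $\mathbb{Z}$-factor acts on $\Gamma_N$ through $\psi(1)$, which by \eqref{psi-phi} acts on the $(x,y)$-directions as the unimodular matrix $N$ with eigenvalues $e^{p},e^{-p}$ and $p\neq 0$; the corresponding lattice in the $(x,y,u)$-directions already contains a copy of $\mathbb{Z}^2\rtimes_N\mathbb{Z}$ with hyperbolic monodromy, which has exponential growth. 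Hence $\Gamma(p)$ has exponential growth, and by the Milnor--Wolf dichotomy for solvable groups it cannot be virtually nilpotent. This is just the group-theoretic shadow of the non-nilpotency recorded in Remark~\ref{rmk:non-nilpotent}.

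The decisive step is to invoke the restrictions on fundamental groups of compact Sasakian manifolds. A compact Sasakian manifold admits a quasi-regular Sasakian structure, and is then a Seifert $S^1$-orbibundle over a compact K\"ahler orbifold; hence $\pi_1(\widehat{G(p)})$ would be an extension by $\mathbb{Z}$, central up to finite index, of a K\"ahler orbifold group. Combined with the theorem that a polycyclic K\"ahler group is virtually nilpotent, this forces any polycyclic Sasakian fundamental group to be itself virtually nilpotent. Since $\Gamma(p)$ is polycyclic but not virtually nilpotent, we reach a contradiction, and therefore $\widehat{G(p)}$ admits no Sasakian structure. The restrictions obtained by X.~Chen~\cite{Ch} on the fundamental groups of compact Sasakian manifolds package exactly this obstruction, so in the write-up I would reduce to verifying that $\Gamma(p)$ lies outside the admissible class.

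I expect the main obstacle to be this last step: making rigorous the passage from ``$\widehat{G(p)}$ is Sasakian'' to a constraint purely on $\Gamma(p)$. This requires (a)~the reduction to a quasi-regular Sasakian structure and control of the resulting Seifert fibration over a possibly singular K\"ahler \emph{orbifold}, and (b)~the orbifold form of the statement that solvable (polycyclic) K\"ahler groups are virtually nilpotent, together with the elementary fact that a central extension by $\mathbb{Z}$ of a virtually nilpotent group is again virtually nilpotent. Each ingredient is available in the literature and is subsumed in the restrictions of~\cite{Ch}; the real content is in assembling them correctly, rather than in any computation specific to $\widehat{G(p)}$.
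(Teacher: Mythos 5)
Your proposal only addresses item (5): items (1)--(4) are treated as already established, and indeed they enter your argument only as motivation. In the paper these are separate claims requiring proof --- (1) and (2) are direct computations with the structure equations and the cohomology groups of Corollary~\ref{derham-cor}, while (3) and (4) require exhibiting explicit quasi-isomorphisms from $\Lambda\langle a_1,a_2,b\rangle$ to $\Omega^*(\widehat{G(p)})$ and to $H_B(\widehat{G(p)})\otimes\Lambda\langle y\rangle$, the latter using the identification of the basic cohomology with $H^*(\hat{K}\times S^1)$ provided by Theorem~\ref{regular-contact}. None of this appears in your write-up, so as a proof of the full theorem it is incomplete; since your argument for (5) does not logically depend on (1)--(4), I assess it on its own below.

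For item (5) your skeleton is the same as the paper's: assume a Sasakian structure exists, observe that $\pi_1(\widehat{G(p)})\cong\Gamma(p)$ is polycyclic because $\widehat{G(p)}$ is a compact solvmanifold, invoke the fact that a polycyclic Sasakian group must be virtually nilpotent, and contradict this with the structure of $\Gamma(p)$. You differ in the two supporting lemmas. First, for ``polycyclic Sasakian implies virtually nilpotent'' the paper simply quotes Kasuya \cite[Corollary 1.3]{Ka}; you instead sketch its proof (quasi-regular deformation, Seifert $S^1$-orbibundle over a compact K\"ahler orbifold, an orbifold version of the theorem on polycyclic K\"ahler groups, and closure of virtual nilpotency under central extensions by $\Z$). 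That chain is essentially what Kasuya formalizes; it is not what Chen \cite{Ch} proves, so the correct citation is \cite{Ka}, and the orbifold K\"ahler step you flag as the main obstacle is indeed the nontrivial input --- your proof is not self-contained without it. Second, for the non-virtual-nilpotency of $\Gamma(p)$ the paper uses Sait\^o's rigidity theorem \cite{Sa}: a finite-index nilpotent subgroup of $\Gamma(p)$ would be a co-compact nilpotent lattice in the completely solvable group $G(p)$, forcing $G(p)$ to be nilpotent and contradicting Remark~\ref{rmk:non-nilpotent}. Your growth argument is a valid and more elementary alternative, but it contains an imprecision: $\Gamma_S=\Gamma_N\rtimes_\psi\Z$ does \emph{not} contain $\Z^2\rtimes_N\Z$ as a subgroup, because any two elements of $\Gamma_N$ whose $(x,y)$-projections are linearly independent have as commutator a nontrivial power of the central element $h_2$. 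What is true is that $\Z^2\rtimes_N\Z$ is the quotient $\Gamma_S/\langle h_2\rangle$; since growth can only decrease under passing to quotients, $\Gamma_S$ (hence $\Gamma(p)$) still has exponential growth, while virtually nilpotent groups have polynomial growth, so your conclusion stands once ``contains a copy of'' is replaced by ``surjects onto''.
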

\begin{proof}
{\em (1)} From (\ref{structure-equations}) and(\ref{phi-hat}), we deduce that $(\hat{\phi}, \hat{\xi}, \hat{\eta}, \hat{g})$ is a contact metric structure on $\widehat{G(p)}$. In addition, using (\ref{structure-eqs-G(p)}), we have that the element in ${\mathfrak g}$, which induces the left-invariant vector field $\xi$ on $G(p)$, belongs to the center of ${\mathfrak g}$. So, the Reeb vector field $\hat{\xi}$ is Killing with respect to the Riemannian metric $\hat{g}$.

{\em (2)} A direct computation, using (\ref{deRham-compact-solv}) and the fact that $d\hat{\eta} = -\hat{\alpha}\wedge \hat{\beta} - \hat{\gamma} \wedge \hat{\theta}$, proves that the Lefschetz relation in degree $1$ (resp. degree $2$) for the compact contact manifold $(\widehat{G(p)}, \hat{\eta})$ is the graph of an isomorphism between $H^1(\widehat{G(p)})$ and $H^4(\widehat{G(p)})$ (resp. $H^2(\widehat{G(p)})$ and $H^3(\widehat{G(p)})$). This proves {\em (2)}.

{\em (3)} Define the map $\tau \colon \Lambda\langle a_1,a_2,b\rangle \to
\Omega^*(M)$ by
\begin{equation*}
	\tau(a_1) = \hat{\gamma},\qquad  \tau(a_2) = \hat{\theta} ,\qquad  \tau(b)=
	\hat{\eta} \wedge (\hat{\theta}\wedge \hat{\gamma} - \hat{\alpha} \wedge
	\hat{\beta}),\qquad
\end{equation*}
where the $1$-forms $\hat{\alpha}$, $\hat{\beta}$, $\hat{\gamma}$,
$\hat{\theta}$ are defined in Subsection~\ref{derham}.
Since, the forms $\hat{\gamma}$, $\hat{\theta}$, and
$\hat{\eta} \wedge (\hat{\theta}\wedge \hat{\gamma} - \hat{\alpha} \wedge
\hat{\beta})$ are closed, the map $\tau$ is a well-defined homomorphism
of CDGAs. It follows from Corollary~\ref{derham-cor} that $\tau$ is a
quasi-isomorphism.
Since $\Lambda\left\langle a_1,a_2,b \right\rangle$ is a free CDGA with zero
differential, it is a minimal Sullivan algebra. Moreover, it is quasi-isomorphic
to its cohomology, and thus is formal.

{\em (4)}
We will show that there is a quasi-isomorphism from $\Lambda\left\langle
a_1,a_2,b
\right\rangle$ into $H_B(M)\otimes \Lambda\left\langle y \right\rangle$
with $dy := [d\eta]_B$.
Since $\xi$
is regular, we get that $H_B(\widehat{G(p)})$ is isomorphic as graded algebra to
$H((K/\Gamma_K)\times S^1)$.
The cohomology ring of $H((K/\Gamma_K)\times S^1)$ was computed
in~\cite{FeGr} and it is isomorphic to
\begin{equation*}
	\Lambda\left\langle u,v \right\rangle \otimes \R[w]/ (w^2)
\end{equation*}
with elements $u$, $v$ of degree $1$ and $w$ of degree $2$.
The element $[d\eta]_B = [\hat{\Omega}]$ is given by $-uv - w$  in this description.
We define the map
$\rho$ from $\Lambda\left\langle a_1,a_2,b \right\rangle$ to
\[
A:= H( (K/\Gamma_K)\times S^1 ) \otimes \Lambda\left\langle y \right\rangle \quad \mbox{(with $dy= -uv-w$),}
\]
by
\begin{equation*}
	\rho(a_1) = u, \qquad \rho(a_2) = v, \qquad \rho(b) = (uv - w) y.
\end{equation*}
The elements $u$ and $v$ are closed in $A$. Moreover,
\begin{equation*}
	d( (uv- w) y) = - (uv-w) (uv + w) = - uv w + w uv = 0.
\end{equation*}
Thus the map $\rho$ is well-defined. It is easy to check that $\rho$ induces an
isomorphism in the cohomology. Thus $A$ is quasi-isomorphic to the de Rham
algebra $\Omega^*(\widehat{G(p)})$, which shows that $\widehat{G(p)}$ is a
manifold of Tievsky type.

{\em (5)} Recall that $\widehat{G(p)}=G(p)/\Gamma(p)$ is a compact solvmanifold with fundamental group $\Gamma(p)$. Now, suppose that $\widehat{G(p)}$ admits a Sasakian structure.
It is well known that the fundamental group of a compact solvmanifold is polycyclic, thus $\Gamma(p)$ is polycyclic. In \cite[Corollary 1.3]{Ka}, Kasuya shows that if the fundamental group of a compact Sasakian manifold is polycyclic, then it is in fact virtually nilpotent, i.e. it contains a nilpotent subgroup $\Gamma'$ of finite index. As $G(p)$ is completely solvable and $\Gamma'$ is a co-compact nilpotent subgroup of $G(p)$, we get by Sait{\^o}'s rigidity theorem \cite{Sa} that $G(p)$ must be nilpotent. But this contradicts Remark~\ref{rmk:non-nilpotent}.
\end{proof}

\end{document}